\documentclass[12pt]{amsart}
\usepackage{amsmath, amssymb}

\usepackage{hyperref}
\hypersetup{%
 colorlinks=true,
 linkcolor=blue,
}

\numberwithin{equation}{section}

\newtheorem{theorem}{Theorem}[section]
\newtheorem{lemma}[theorem]{Lemma}

\newtheorem{conjecture}[theorem]{Conjecture}
\newtheorem{goal}[theorem]{Goal}

\theoremstyle{definition}

\newtheorem*{rem-nonum}{Remark}
\newtheorem*{remarks-nonum}{Remarks}

\newcommand{\bbC}{{\mathbb C}}

\newcommand{\bbZ}{{\mathbb Z}}

\def\Imag{\operatorname{Im}}
\def\Log{\operatorname{Log}}
\def\Real{\operatorname{Re}}

\begin{document}

\baselineskip=17pt

\title[Smyth's Conjecture on Mahler Measure\ldots]{Smyth's Conjecture on the Mahler Measure of non-reciprocal trinomials of height $1$}

\author{Paul M Voutier}
\address{London, UK}
\email{Paul.Voutier@gmail.com}

\begin{abstract}
We prove a conjecture of Smyth on the Mahler measure of non-reciprocal trinomials
of height $1$, determining exactly when their Mahler measures are greater than or
less than $M(x+y+1)=\rho=1.381356\ldots$, conjectured to be the smallest limit
point of $M(P)$ for non-reciprocal polynomials. Conjecturally, our result gives
the full spectrum of $M(P)<\rho$ for all non-reciprocal polynomials, not just
for trinomials.
\end{abstract}

\maketitle

\section{Introduction}

Consider
\[
P(z) = a_{n} \left( z-\alpha_{1} \right) \cdots \left( z-\alpha_{n} \right)
\in \bbZ[z],
\]
with $a_{n} \neq 0$. Writing $P(z)=\sum_{i=0}^{n} a_{i}z^{i}$, we define the
\emph{height of $P(z)$} to be $\max_{i=0,\ldots, n} \left| a_{i} \right|$.

The \emph{Mahler measure, $M(P)$, of $P(z)$} is defined by
\[
M(P) = \left| a_{n} \right| \prod_{i=1}^{n} \max \left\{ 1, \left| \alpha_{i} \right| \right\}.
\]

If $P(z)$ is the minimal polynomial of an algebraic number $\alpha$, we also say
that $M(\alpha)=M(P)$.

The Mahler measure is related to the absolute Weil height, $h(\alpha)$, by the
formula $\log M(\alpha)=nh(\alpha)$, where $n$ is the degree of $\alpha$. Both
are very important in the study of Diophantine problems, algebraic number theory
and many related fields.

A well-known conjecture, due to Lehmer \cite{Le}, asks if there exists an absolute
constant, $\mu>1$, such that if $M(P) \neq 1$, then $M(P) \geq \mu$. It is believed
that the answer is yes. Furthermore, the best possible $\mu$ is conjectured to
be $1.1762808\ldots=M(P)$ for $P(z)=z^{10}+z^{9}-z^{7}-z^{6}-z^{5}-z^{4}-z^{3}+z+1$,
an example known to Lehmer.
The best results to date originate from the ideas and work of Dobrowolski \cite{Do}
and show
that $\log M(P) \geq C \left( \log \log n / \log n \right)^{3}$ for some $C>0$
($C=1/4$ is the best known value for all $n \geq 2$).

Lehmer's polynomial is a reciprocal polynomial. That is, $P(z)=\pm z^{\deg(P)}P(1/z)$.
For non-reciprocal polynomials, we have better results. Smyth \cite{Smyth1}
proved that $M(P) \geq M \left( z^{3}-z-1 \right)=1.324717\ldots$ for irreducible
non-reciprocal polynomials $P$ with $P(0), P(1) \neq 0$.

\subsection{Conjectures}

Given such a result, it is natural to investigate the spectrum of values of
the Mahler measure for non-reciprocal polynomials.
In this direction, we have the following conjecture, due to Boyd \cite[bottom of page~458]{Boyd}.

\begin{conjecture}
\label{conj:1.1}
The smallest limit point of $M(P)$ for non-reciprocal polynomials $P(z)$ is
$M(x+y+1)=\rho=1.381356444518\ldots$. 
\end{conjecture}

Note that the Mahler measure can also be defined for polynomials in several
variables (see \cite{Mah}).

If Lehmer's conjecture is true (with $\mu=1.1762808\ldots$), then any polynomial,
$P(z)$, with $M(P)<\rho$ can have only one noncyclotomic irreducible factor.
Lehmer's conjecture is still unproven. Nonetheless, it suggests the following.

\begin{goal}
Find all irreducible non-reciprocal polynomials $P(z) \in \bbZ[z]$ with
$P(0), P(1) \neq 0$ and $M(P) < \rho$.
\end{goal}

Flammang \cite[p.~232]{F1} has proposed a conjecture about such polynomials.

\begin{conjecture}[V. Flammang]
\label{conj:1.2}
If $P(z) \in \bbZ[z]$ is an irreducible non-reciprocal polynomial with $M(P)<\rho$,
$P(0), P(1) \neq 0$, and $P \neq P_{1} \left( z^{m} \right)$ for any $m>1$ and
$P_{1}(z) \in \bbZ[z]$, then there exists a cyclotomic polynomial, $C_{P}(z)$,
such that $P(z) C_{P}(z)$ is a trinomial of the form $z^{n} \pm z^{k} \pm 1$.
\end{conjecture}

A first step towards the goal above would be to prove the following conjecture
due to C.J. Smyth (see Conjecture~1.1 of \cite{F1}).

\begin{conjecture}[C.J. Smyth]
\label{conj:1.3}
Let $k$ and $n$ be relatively prime positive integers with $0<k<n$.

\noindent
{\rm (a)} Put $P_{n,k}^{(1)}(z)=z^{n}+z^{k} + 1$. $M \left( P_{n,k}^{(1)} \right)<\rho$ if and only if $3|(n+k)$.\\
{\rm (b)} Put $P_{n,k}^{(2)}(z)=z^{n}-z^{k} + 1$. $M \left( P_{n,k}^{(2)} \right)<\rho$ with $n$ odd if and only if $3 \nmid (n+k)$.\\
{\rm (c)} Put $P_{n,k}^{(3)}(z)=z^{n}-z^{k} - 1$. $M \left( P_{n,k}^{(3)} \right)<\rho$ with $n$ even if and only if $3 \nmid (n+k)$.
\end{conjecture}

We show in Section~\ref{sect:notation} why these three cases cover all trinomials
of height $1$.

\subsection{Previous Work}

Part~(a) of Conjecture~\ref{conj:1.3} was proven for $n$ sufficiently large with
respect to $k$ by Duke \cite{Du}. In fact, he proved that
\begin{equation}
\label{eq:duke1}
\log M \left( P_{n,k}^{(1)}(z) \right)
= \log M(x+y+1)+\frac{\alpha(n+k)}{n^{2}}+O \left( \frac{k}{n^{3}} \right)
\end{equation}
where $\alpha(n+k)=-\pi \sqrt{3}/6$ if $3|(n+k)$ and $\alpha(n+k)=\pi \sqrt{3}/18$
otherwise. This generalised an earlier result due to Boyd \cite{Boyd} for $k=1$.

Flammang \cite{F1} considered parts~(b) and (c) of Conjecture~\ref{conj:1.3},
using the same technique as Duke. Like the results of Duke, her results (see her
Theorem~2.1) also imply that
parts~(b) and (c) of Conjecture~\ref{conj:1.3} are true once $n$ is sufficiently
large with respect to $k$. ``Sufficiently large'' can be made effective, and even
explicit, in both Duke's and Flammang's results.

\subsection{Results}

In this paper, we prove Smyth's conjecture (Conjecture~\ref{conj:1.3}) without
the conditions on the size of $n$ relative to $k$ that arise in \cite{Du,F1}.

\begin{theorem}
\label{thm:2.1}
Let $k$ and $n$ be relatively prime positive integers with $n \geq 2k$.
Write $n+k \equiv \epsilon \pmod{3}$, where $\epsilon \in \{ -1, 0, 1 \}$.

{\rm (a)} Conjecture~$\ref{conj:1.3}($a$)$ holds.
If $3|(n+k)$, then
\[
\log M \left( z^{n}+z^{k}+1 \right)
= \log M \left( x+y+1 \right)
-\frac{\pi\sqrt{3}}{6n^{2}}
-\frac{\pi\sqrt{3}\,k}{6n^{3}}
+O\!\left(\frac{k^{2}}{n^{4}}\right).
\]

If $3 \nmid (n+k)$, then
\[
\log M \left( z^{n}+z^{k}+1 \right)
= \log M \left( x+y+1 \right)
+ \frac{\pi\sqrt{3}}{18n^{2}}
+ \left( \frac{\pi\sqrt{3}\,k}{18} + \epsilon\frac{4\pi^{2}}{81} \right)\frac{1}{n^{3}}
+O\!\left(\frac{k^{2}}{n^{4}}\right),
\]

{\rm (b)} Suppose that $n$ is also odd. Conjecture~$\ref{conj:1.3}($b$)$ holds.
If $3|(n+k)$, then
\[
\log M \left( z^{n}-z^{k}+1 \right)
= \log M \left( x+y+1 \right)
+ \frac{\pi\sqrt{3}}{12n^{2}}
+ \frac{\pi\sqrt{3}\,k}{12n^{3}}
+O\!\left(\frac{k^{2}}{n^{4}}\right).
\]

If $3 \nmid (n+k)$, then
\[
\log M \left( z^{n}-z^{k}+1 \right)
= \log M \left( x+y+1 \right)
- \frac{\pi\sqrt{3}}{36n^{2}}
- \left( \frac{\pi\sqrt{3}\,k}{36} + \epsilon\frac{5\pi^{2}}{81} \right)\frac{1}{n^{3}}
+O\!\left(\frac{k^{2}}{n^{4}}\right).
\]

{\rm (c)} Suppose that $n$ is even. Conjecture~$\ref{conj:1.3}($c$)$ holds.
If $3|(n+k)$, then
\[
\log M \left( z^{n}-z^{k}-1 \right)
= \log M \left( x+y+1 \right)
+ \frac{\pi\sqrt{3}}{12n^{2}}
+ \frac{\pi\sqrt{3}\,k}{12n^{3}}
+O\!\left(\frac{k^{2}}{n^{4}}\right).
\]

If $3 \nmid (n+k)$, then
\[
\log M \left( z^{n}-z^{k}-1 \right)
= \log M \left( x+y+1 \right)
- \frac{\pi\sqrt{3}}{36n^{2}}
- \left( \frac{\pi\sqrt{3}\,k}{36} + \epsilon\frac{5\pi^{2}}{81} \right)\frac{1}{n^{3}}
+O\!\left(\frac{k^{2}}{n^{4}}\right).
\]

The constants involved in all the $O$ terms above are absolute and effective.
\end{theorem}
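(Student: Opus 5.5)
The plan is to express $\log M$ as a lattice sum of Fourier coefficients of $\log|1+x+y|$ on the torus and to extract the $1/n^{2}$ and $1/n^{3}$ terms from the two singular points of that function.

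First I would reduce everything to a single two-variable object. By Jensen's formula,
\[
\log M\left(z^{n}\pm z^{k}\pm 1\right)=\int_{0}^{1}\log\left|1\pm e^{2\pi i nt}\pm e^{2\pi i kt}\right|\,dt .
\]
Replacing $t$ by $t+1/2$ when $n$ is odd, or by $t+1/(2k)$-type shifts, or conjugating, I can rewrite cases (b) and (c) as the same integral with $x=\pm e(nt)$ and $y=\pm e(kt)$ suitably signed. The parity hypotheses on $n$ are exactly what make this work. Note that the hypothesis $n\ge 2k$ is already part of the statement. If one wanted to cover $k>n/2$ as well, the reciprocal $z^{n}P(1/z)$ would send $k\mapsto n-k$, since Mahler measure is invariant under taking reciprocals.

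With $F(u,v)=\log|1+e(u)+e(v)|$ on $\mathbb{T}^{2}$ and Fourier coefficients $c(a,b)$, we have $c(0,0)=\log\rho$. Since $\gcd(n,k)=1$,
\[
\log M = \log\rho+\sum_{m\neq 0} c(km,-nm),
\]
and for the signed variants the coefficients acquire characters $(\pm1)^{km}$ and $(\pm1)^{nm}$.

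The core step is an explicit asymptotic expansion of $c(a,b)$ for $(a,b)$ large. The function $F$ is real-analytic except at the two points where $1+x+y=0$ on the torus, namely $(x,y)=(\omega,\omega^{2})$ and its conjugate, with $\omega=e^{2\pi i/3}$. Near each such point, $F$ equals $\log|\text{linear form}|$ plus a smooth correction. I would localize with a smooth cutoff and integrate by parts on the smooth remainder, which gives rapid decay. The local model contributes a main term of size $|Q(a,b)|^{-1}$, where $Q$ is a quadratic form coming from the Hessian geometry, times the phase $e\left(\pm(a-b)/3\right)$. The next term of the local Taylor expansion contributes terms of order $|(a,b)|^{-3}$.

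Plugging in $(a,b)=(km,-nm)$, the phase becomes $e\left(\pm m(n+k)/3\right)$, which is exactly where the dichotomy $3\mid(n+k)$ versus $\epsilon=\pm1$ enters. Summing over $m$ gives $\sum 1/m^{2}$ twisted by a character mod $3$, and these sums produce the constants $-\pi\sqrt3/6$ and $\pi\sqrt3/18$. Expanding $Q(k,-n)^{-1}=n^{-2}\left(1+O(k/n)\right)$ gives the $k/n^{3}$ terms. The cubic corrections, summed with the character, give $\sum\chi(m)/m^{3}$-type constants, which I expect to be the source of the $4\pi^{2}/81$ and $5\pi^{2}/81$ terms. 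The signed cases (b) and (c) follow by the same computation with the extra characters, which effectively replace $\omega$ by $-\omega$ and change the constants to $\pi\sqrt3/12$ and $-\pi\sqrt3/36$.

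The main obstacle is making all of this uniform in $k/n$, up to $k=n/2$, with explicit absolute constants. Because both indices are large, a naive Duke-type expansion in $k/n$ is not enough. I would try to prove a bound of the shape $c(a,b)=\text{(main)}+\text{(cubic)}+O(|(a,b)|^{-4})$ with a fully explicit constant. My first attempt would be to write $c(a,b)$ by a one-dimensional reduction: integrate over $v$ first using the closed form of $\int\log|1+e(u)+e(v)|e(-bv)\,dv$ in terms of powers of $-(1+e(u))$. This yields an explicit one-variable oscillatory integral in $u$ whose endpoints are at $u=\pm1/3$, and I would expand it by integration by parts.

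Finally, Conjecture~\ref{conj:1.3} follows from the sign of the $1/n^{2}$ term once $n$ exceeds an explicit bound. The remaining finitely many pairs $(n,k)$ are settled by direct computation of the Mahler measures.
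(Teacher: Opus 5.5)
You have the right framework. It is the paper's own decomposition in another guise: if you expand $\Log(1-z)=-\sum_{q}z^{q}/q$ in \eqref{eq:lem41-CG-1}, the $q$-th term is exactly $c(qk,-qn)+c(-qk,qn)$ (twisted by $\varepsilon_{k}^{q}$ in cases (b), (c)). Your predicted constants also agree with Lemmas~\ref{lem:est1} and \ref{lem:5.1}.

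\textbf{The gap.} Everything of substance sits in the step you call the ``main obstacle'' and leave at ``I would try''. That step is an expansion of $c(qk,-qn)$ with error $O(q^{-4}n^{-4})$, uniform for $k\le n/2$, with constants small enough that the leftover computation is feasible. You supply no mechanism for it.
\begin{itemize}
\item Integrating by parts with a power of $1+e(u)$ treated as a slowly varying amplitude loses a factor of order $k/n$ per step. That is why arguments of Duke--Flammang type need $n$ large compared with $k$.
\item In your orientation (integrating out the $k$-variable $v$ first) it is worse. The power on $1+e(u)$ is then $mn$, while the explicit frequency is only $mk$.
\item Your localisation step is also false as stated. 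Near $(\omega,\omega^{2})$, $F-\log|L|=\Real\Log(G/L)$ with $G=1+x+y$ is bounded but not smooth; it is a sum of homogeneous terms of degree $1,2,\ldots$. Were it smooth, it would contribute only rapidly decaying coefficients, and the $\epsilon\,n^{-3}$ terms could not arise.
\end{itemize}

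\textbf{What the paper does instead.} It deforms each one-variable integral onto the ray $t=-is/\pi$ (Lemma~\ref{lem:actual-1}).
\begin{itemize}
\item The far vertical sides carry real logarithms and contribute nothing to the real part.
\item On the ray, $|u(s)|^{q}\le e^{-q(2n-k)s}$. On the real segment, by contrast, $|r_{1}(t)|^{q}$ decays only at rate about $\sqrt{3}\pi qk$.
\item Write $u=we^{kg_{-}}$ and $v=we^{kg_{+}}$, with $|g_{\pm}\mp 2s^{2}|\le \frac{4\sqrt{3}}{3}s^{3}$. The error terms $qks^{3}$ and $q^{2}k^{2}s^{4}$, integrated against $e^{-q(2n-k)s}$ (or $e^{-q((2n-k)-2k/\sqrt{3})s}$ for $v$), are $O(k\,q^{-3}(2n-k)^{-4})$.
\item The prefactor $2/(\pi k)$ cancels the $k$. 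This gives the uniform bound $156/(2n-k)^{4}$ and the threshold $2n-k\ge 36$.
\end{itemize}

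\textbf{If you stay on the real line,} you would have to do all of the following.
\begin{itemize}
\item Integrate by parts against the full exponent $q\psi$ with $\psi=\log r_{1}$, so that $|\psi'|\ge\pi(2n-k)$ and $\psi^{(j)}=O(k)$ for $j\ge 2$.
\item Check that the boundary terms at $t=1/3$ have zero real part.
\item Carry out three steps rather than two. Because $\int_{0}^{1/3}|r_{1}|^{q}\,dt\asymp 1/(qk)$, absolute bounds after two steps only give $O(1/(kn^{3}))$.
\item Handle the neighbourhood of $u=1/2$ separately. There $1+e(u)=0$, the gain per step degenerates, and for small $k$ the contribution is small only in its real part.
\end{itemize}
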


Although we start with the same integrals as in \cite{Du,F1}, we take a different
approach from these papers.
There are two key differences.

First, we move the two integrals in \eqref{eq:lem41-CG-3}
into the lower half-plane. Thus the oscillations that arise from the initial
integrals become exponential decay. This also allows us to simplify our argument,
avoiding repeated use of integration by parts to overcome the effects of these
oscillations.

Secondly, rather than evaluating the resulting integrals, we integrate using a
simpler function that allows us to approximate well the two integrands in our
moved integrals. This simpler integral is our $\Lambda_{n,k}^{(j)}$ defined at
the start of Section~\ref{sect:intermediate}. Another consequence of using
$\Lambda_{n,k}^{(j)}$ is that the $n^{-3}$ terms in the expansions in
Theorem~\ref{thm:2.1} can be naturally obtained with
only a little more work in Sections~\ref{sect:intermediate} and \ref{sect:5}.

\section{Preliminaries}
\label{sect:notation}

\subsection{Reductions}

The restriction to $n$ odd in part~(b) is not an actual restriction as for $n$
even, we must have $k$ odd and hence $(-z)^{n}-(-z)^{k}+1=z^{n}+z^{k}+1$. So it
is covered by part~(a).

Similarly, the restriction to $n$ even in part~(c) is not an actual restriction.
If $n$ and $k$ are both odd, then $-\left( (-z)^{n}-(-z)^{k}-1 \right)
=z^{n}-z^{k}+1$, so it is covered by part~(b).

If $n$ is odd and $k$ is even, then $-\left( (-z)^{n}-(-z)^{k}-1 \right)
=z^{n}+z^{k}+1$, so it is covered by part~(a).

The remaining trinomials, $z^{n}+z^{k}-1$, are also covered by the three cases
above. If $n$ is even, then
$k$ must be odd and $(-z)^{n}+(-z)^{k}-1=z^{n}-z^{k}-1$. This is covered by part~(c).

If $n$ and $k$ are both odd, then $-\left( (-z)^{n}+(-z)^{k}-1 \right)
=z^{n}+z^{k}+1$, so it is covered by part~(a).

If $n$ is odd and $k$ is even, then $-\left( (-z)^{n}+(-z)^{k}-1 \right)
=z^{n}-z^{k}+1$, so it is covered by part~(b).

If $d=\gcd(n,k)$, then $z^{n} \pm z^{k} \pm 1=Q \left( z^{d} \right)$, where $Q$
is a trinomial whose two non-zero exponents are relatively prime. Moreover,
$M \left( Q \left( z^{d} \right) \right)=M(Q)$. Hence, we may assume in this
work that $\gcd(n,k)=1$.

It suffices to consider $n \geq 2k$. We can do that without any loss of
information because by taking reciprocals, we can reduce the case of $n<2k$ to
$n>2k$ and the Mahler measure is unchanged. Also, the middle exponent is
$n-k<n/2$ and $n+(n-k) \equiv -(n+k) \pmod{3}$, so the divisibility by $3$
condition is preserved. For $n=2k$ with $\gcd(n,k)=1$, among
the polynomials considered in Conjecture~\ref{conj:1.3} only $z^{2}-z-1$ is not
a cyclotomic polynomial. It has $M \left( z^{2}-z-1 \right)=\left( 1+\sqrt{5} \right)/2
=1.61803\ldots$, so Conjecture~\ref{conj:1.3}(c) holds for it.

\subsection{Notation}

Let $\varepsilon_{k}$ be the coefficient of $z^{k}$ in $P_{n,k}^{(j)}(z)$
for $j \in \{ 1,2,3 \}$. Put
\[
\Delta_{n,k}^{(j)}= \log M \left( P_{n,k}^{(j)} \right) - \log M(1+x+y).
\]

For $s \geq 0$, define
\[
d(s)=\cosh s+i\sqrt{3}\sinh s,\quad
u(s)=e^{-(2n-k)s}\overline{d(s)}^{-k},\quad
v(s)=e^{-(2n-k)s}d(s)^{k}.
\]

Throughout the paper, $\Log$ denotes the principal branch of the logarithm function.
That is, writing $z=re^{i \theta}$ for $z \neq 0$ with $r=|z|$ and $-\pi<\theta \leq \pi$,
we have $\Log(z)=\log r +i\theta$.

Also, when $j=2$ we assume $n$ is odd, and when $j=3$ we assume $n$ is even. By
the reductions at the start of this section, this does not impose a restriction
on our results.

\section{The Exact Contour Identity}
\label{sect:lem1}

Our goal in this section is to prove Lemma~\ref{lem:actual-1}. We will prove
two intermediate lemmas first.

\begin{lemma}
\label{lem:new-1}
For relatively prime positive integers $n$ and $k$ with $n \geq 2k$, and
$j \in \{ 1,2,3 \}$,
\begin{align}
\label{eq:lem41-CG-3}
\Delta_{n,k}^{(j)}
= & \frac{2}{k} \left\{ \int_{0}^{1/3} \log \left| 1-\varepsilon_{k}e((n+k)/3)r_{1}(t) \right| \,dt \right. \\
& \hspace*{7.0mm} \left. + \int_{0}^{1/6} \log \left| 1-\varepsilon_{k}e(-(n+k)/3)r_{2}(t) \right| \,dt
\right\}, \nonumber
\end{align}
where
\begin{equation}
\label{eq:CG-4b}
r_{1}(t)=
\left( 2\cos \left( \frac{\pi}{3}-\pi t \right) \right)^{-k}
e^{-i\pi (2n-k)t}
\end{equation}
and
\begin{equation}
\label{eq:CG-5b}
r_{2}(t)=
\left( 2\cos \left( \frac{\pi}{3}+\pi t \right) \right)^{k}
e^{-i\pi (2n-k)t}.
\end{equation}
\end{lemma}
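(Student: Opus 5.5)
The plan is to reduce $\log M(P_{n,k}^{(j)})$ to a one-variable integral, use $\gcd(n,k)=1$ to replace the trinomial by a polynomial in $e(s)$ with a clean "dominant term", and then peel off $\log M(x+y+1)$ using the known formula
\[
\log M(1+x+y)=\int_0^1 \log^{+}\left|1+e(s)\right|\,ds .
\]
This formula follows from Jensen's formula in one variable, and it is the same starting point as in \cite{Du,F1}.

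\textbf{Step 1: product over $k$-th roots of unity.} Write $P_{n,k}^{(j)}(z)=z^n+\varepsilon_k z^k+\delta$ with $\delta=\pm1$. Then
\[
\log M(P_{n,k}^{(j)})=\int_0^1\log|e(nt)+\varepsilon_k e(kt)+\delta|\,dt .
\]
Substitute $t=(s+m)/k$ with $s\in[0,1)$ and $m=0,\dots,k-1$. Since $\gcd(n,k)=1$, the numbers $e(nm/k)$ run over all $k$-th roots of unity as $m$ varies. The identity $\prod_{\zeta^k=1}(c+w\zeta)=c^k-(-w)^k$ then gives
\[
\log M(P_{n,k}^{(j)})=\frac1k\int_0^1\log\bigl|(\delta+\varepsilon_k e(s))^k-(-1)^k e(ns)\bigr|\,ds .
\]

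\textbf{Step 2: split according to $|c(s)|$.} Put $c(s)=\delta+\varepsilon_k e(s)$. On the set where $|c(s)|>1$, factor out $c^k$ to get $k\log|c|+\log|1-(-1)^k e(ns)c^{-k}|$. On the complementary set, factor out $e(ns)$ to get $\log|1-(-1)^k c^k e(-ns)|$. The terms $k\log|c|$ integrate to $k\int_0^1\log^+|1+\varepsilon_k\delta\, e(s)|\,ds$. After the shift $s\mapsto s+1/2$ when $\varepsilon_k\delta=-1$, this equals $k\log M(1+x+y)$, which cancels exactly. This leaves
\[
\Delta_{n,k}^{(j)}=\frac1k\int_0^1\log|1-R(s)|\,ds ,
\]
where $|R(s)|\le1$ and the two formulas for $R$ are used on the two arcs.

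\textbf{Step 3: parametrize the arcs.} When $\varepsilon_k\delta=1$, write $1+e(s)=2\cos(\pi s)\,e(s/2)$. Then $|c|>1$ exactly on $|s|<1/3$, an arc of length $2/3$, and $|c|<1$ on $1/3<|s|<1/2$, an arc of length $1/3$. The map $s\mapsto -s$ conjugates the integrand, so each arc can be folded in half; this produces the factor $2/k$ and the ranges $[0,1/3]$ and $[0,1/6]$. Next, translate each half-arc so that the variable starts at the endpoint $s=\pm1/3$, where $|c|=1$. Concretely, use $s=1/3-t$ on the first arc, which turns $2\cos(\pi s)$ into $2\cos(\pi/3-\pi t)$. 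On the second arc use $s=1/3+t$, with a sign absorbed so that the cosine becomes $2\cos(\pi/3+\pi t)$. The phase $e(\pm ns)\,e(\mp ks/2)^{k}$ evaluated at $s=1/3$ produces the constants $e(\pm(n+k)/3)$, together with the $(-1)^k$ and the sign $\varepsilon_k$. The remaining phase is $e^{-i\pi(2n-k)t}$, which gives exactly $r_1$ and $r_2$ as defined in \eqref{eq:CG-4b} and \eqref{eq:CG-5b}. For $j=2,3$ the first factor is $c=\delta+\varepsilon_k e(s)$ with $\varepsilon_k\delta=-1$, so one additionally shifts $s$ by $1/2$. The parity hypotheses, namely $n$ odd for $j=2$ and $n$ even for $j=3, together with the sign $\delta$, are used to check that the extra factors $e(n/2)$, $(-1)^k$ and $\delta^k$ combine into the single prefactor $\varepsilon_k$.

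\textbf{Main obstacle.} The hard part is the bookkeeping of signs and phases in Step 3. One must verify that in all three cases the unimodular constants collapse to precisely $-\varepsilon_k e(\pm(n+k)/3)$, and that the orientation of each substitution gives $r_1$ with exponent $-k$ on the first arc and $r_2$ with exponent $+k$ on the second. I would first carry the computation through carefully for $j=1$, and then reduce $j=2,3$ to it through the half-period shift. The boundary point $s=\pm1/3$, where $|R|=1$, contributes only an integrable logarithmic singularity, so it causes no difficulty.
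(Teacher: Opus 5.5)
Your proposal follows essentially the paper's own proof: Jensen's formula, the product over $k$-th roots of unity using $\gcd(n,k)=1$, splitting at $|1+e(s)|=1$ (that is, $s=\pm 1/3$), folding via $s\mapsto -s$, and the substitutions $s=1/3\mp t$. Cancelling the $\log^{+}$ term before normalizing the three cases, rather than after as the paper does, makes no difference.

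There is one slip to fix. For $j=3$ you have $\varepsilon_k=\delta=-1$, so $\varepsilon_k\delta=+1$, and no half-period shift should be made. A shift would turn $c$ into $-1+e(s)$ and move the arcs, so your stated arcs and substitutions would no longer apply. Instead, use $c(s)=-(1+e(s))$. Then $c^{\pm k}=(-1)^k(1+e(s))^{\pm k}=\varepsilon_k(1+e(s))^{\pm k}$, because $k$ is odd, which is forced by $n$ even and $\gcd(n,k)=1$. This is exactly how the paper handles $j=3$.
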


\begin{proof}
By Jensen's formula,
\[
\log M \left( P_{n,k}^{(j)} \right)
=\int_{0}^{1}\log \left| P_{n,k}^{(j)}(e(t)) \right| \,dt,
\]
where $e(u)=e^{2\pi i u}$.

Write $P_{n,k}^{(j)}(z)=z^{n}+\varepsilon_{k}z^{k}+\varepsilon_{0}$, where
$\varepsilon_{0}, \varepsilon_{k}=\pm 1$. Split the integral into the $k$
intervals $[\ell/k,(\ell+1)/k]$ for $\ell=0,\ldots,k-1$. Since $\gcd(n,k)=1$,
the numbers $e(n\ell/k)$, $0 \leq \ell<k$, are precisely the
$k$-th roots of unity. Hence, from the change of variables $t=(u+\ell)/k$,
\begin{align}
\label{eq:lem41-int1}
\log M \left( P_{n,k}^{(j)} \right)
&=\frac{1}{k} \int_{0}^{1} \log \left| \prod_{\ell=0}^{k-1} \left( \varepsilon_{0} + \varepsilon_{k} e(u)+e(nu/k)e(n\ell/k) \right) \right|\,du \nonumber \\
&=\frac{1}{k} \int_{0}^{1} \log \left| \left( \varepsilon_{0}+\varepsilon_{k}e(u) \right)^{k}-(-1)^{k} e(nu) \right|\,du.
\end{align}

We obtain the second equality from the identity $\prod_{\omega^{k}=1} (A+B\omega)
=A^{k}-(-B)^{k}$ applied with $A=\varepsilon_{0}+\varepsilon_{k}e(u)$ and
$B=e(nu/k)$.

We can write this as
\begin{equation}
\label{eq:lem41-int2}
\log M \left( P_{n,k}^{(j)} \right)
=\frac{1}{k} \int_{0}^{1} \log \left| \left( 1+e(u) \right)^{k}-\varepsilon_{k}(-1)^{k}e(nu) \right|\,du.
\end{equation}

This is immediate for $j=1$. I.e., $P_{n,k}^{(j)}(z)=P_{n,k}^{(1)}(z)=z^{n}+z^{k}+1$.

\vspace*{2.0mm}

For $j=2$, that is, $P_{n,k}^{(j)}(z)=P_{n,k}^{(2)}(z)=z^{n}-z^{k}+1$, apply the
translation $u \mapsto u+1/2$. Since $n$ is odd, we have $1-e \left( u+1/2 \right)=1+e(u)$
and $e \left( n \left( u+1/2 \right) \right)=-e(nu)$. So the integrand in \eqref{eq:lem41-int1}
becomes
\[
\left| (1+e(u))^{k}+(-1)^{k}e(nu) \right|
= \left| (1+e(u))^{k}-\varepsilon_{k}(-1)^{k}e(nu) \right|.
\]
Using the periodicity of the integrand, we can take the integral over $[0,1]$ to
obtain \eqref{eq:lem41-int2}.

\vspace*{2.0mm}

Lastly, for $j=3$, that is, $P_{n,k}^{(j)}(z)=P_{n,k}^{(3)}(z)=z^{n}-z^{k}-1$,
we have $n$ even and $k$ odd. Therefore,
\[
(-1-e(u))^{k}-(-1)^{k}e(nu)=-(1+e(u))^{k}+e(nu).
\]
Its absolute value is
\[
\left| (1+e(u))^{k}-e(nu) \right|
=\left| (1+e(u))^{k}-\varepsilon_{k} (-1)^{k}e(nu) \right|,
\]
since $\varepsilon_{k}=-1$ and $k$ is odd.

The integrand in \eqref{eq:lem41-int2} is invariant under $u \mapsto 1-u$ (the
quantity inside the absolute value in the integrand in \eqref{eq:lem41-int2}
becomes its complex conjugate. Hence its absolute value is unchanged), so
\[
\log M \left( P_{n,k}^{(j)} \right)
=\frac{2}{k} \int_{0}^{1/2} \log \left| (1+e(u))^{k}-\varepsilon_{k}(-1)^{k} e(nu) \right|\,du.
\]

Also, by Jensen's formula in the variable $y$,
\[
\log M (1+x+y)
=\int_{0}^{1} \log^{+} |1+e(u)| \,du
=2\int_{0}^{1/3} \log |1+e(u)| \,du,
\]
where $\log^{+}(z) = \log \max (1, |z|)$ for any $z \in \bbC$. The bounds for
the second integral arise since $1+e(u)=2\cos(\pi u)e^{\pi i u}$ and since for
$0 \leq u \leq 1/2$, we have $2\cos(\pi u) \geq 1$ only when $0 \leq u \leq 1/3$.
 
Therefore
\begin{align}
\label{eq:lem41-CG-1}
\Delta_{n,k}^{(j)}
= & \frac{2}{k} \int_{0}^{1/3} \log \left| 1-\varepsilon_{k}(-1)^{k} \frac{e(nu)}{(1+e(u))^{k}} \right|\,du\\
& +\frac{2}{k} \int_{1/3}^{1/2} \log \left| 1-\varepsilon_{k}(-1)^{k}e(-nu)(1+e(u))^{k} \right|\,du \nonumber.
\end{align}

To obtain these integrals, on $0 \leq u \leq 1/3$, we factored out $(1+e(u))^{k}$,
while on $1/3 \leq u \leq 1/2$, we factored out $e(nu)$.

For $0 \leq u \leq 1/2$,
\[
1+e(u)=2\cos(\pi u)e^{\pi i u}.
\]

We have
\[
(-1)^{k} e^{\pi i(2n-k)/3}=e^{2\pi i(n+k)/3}=e((n+k)/3).
\]

In the first integral in \eqref{eq:lem41-CG-1}, put $u=1/3-t$, and in the second,
put $u=1/3+t$. Using $\varepsilon_{k}(-1)^{k}e(n(1/3-t))/(1+e(1/3-t))^{k}=\varepsilon_{k}e((n+k)/3)r_{1}(t)$
and $\varepsilon_{k}(-1)^{k}e(-n(1/3+t))(1+e(1/3+t))^{k}=\varepsilon_{k}e(-(n+k)/3)r_{2}(t)$,
we obtain \eqref{eq:lem41-CG-3} and the lemma follows.
\end{proof}

Let $S>\pi/3$ be a positive real number. Put
\begin{align*}
R_{1}(S) & = \left\{ x-is/\pi: 0 \leq x \leq 1/3, 0 \leq s \leq S \right\}
\text{ and }\\
R_{2}(S) & = \left\{ x-is/\pi: 0 \leq x \leq 1/6, 0 \leq s \leq S \right\}.
\end{align*}

For any $0<\delta<1/6$, we also put
\[
Q_{\delta}= \left\{ r e^{i\theta}: 0 \leq r<\delta, -\pi/2 \leq \theta \leq 0 \right\}.
\]
The lower bound for $S$ is chosen simply so that $Q_{\delta}$ will always be
inside $R_{1}(S)$ and $R_{2}(S)$.

When $\varepsilon_{k} e(\pm (n+k)/3)=1$ so that we have improper integrals, we
will remove $Q_{\delta}$ from $R_{1}(S)$ and $R_{2}(S)$.

\begin{lemma}
\label{lem:new-2}
{\rm (a)} Let $n \geq 2k$ and write $t=x-is/\pi$ for $s \geq 0$. Then
\begin{equation}
\label{eq:lem31-3a}
\left| r_{1}(t) \right| \leq e^{-(2n-k)s}
\end{equation}
for $0 \leq x \leq 1/3$, and
\begin{equation}
\label{eq:lem31-3b}
\left| r_{2}(t) \right| \leq e^{-((2n-k)-2k/\sqrt{3})s}
\end{equation}
for $0 \leq x \leq 1/6$.

\noindent
{\rm (b)} If $\varepsilon_{k}e((n+k)/3) \neq 1$, then
$\Log \left( 1-\varepsilon_{k}e((n+k)/3)r_{1}(t) \right)$ is holomorphic on
$R_{1}(S)$ for any $S>\pi/3$.

If $\varepsilon_{k}e((n+k)/3)=1$, then
$\Log \left( 1-\varepsilon_{k}e((n+k)/3)r_{1}(t) \right)$ is holomorphic on
$R_{1}(S) \backslash Q_{\delta}$ for any $S>\pi/3$ and any $0<\delta<1/6$.

Similarly, if $\varepsilon_{k}e(-(n+k)/3) \neq 1$, then
$\Log \left( 1-\varepsilon_{k}e(-(n+k)/3)r_{2}(t) \right)$ is holomorphic on
$R_{2}(S)$ for any $S>\pi/3$.

If $\varepsilon_{k}e(-(n+k)/3)=1$, then
$\Log \left( 1-\varepsilon_{k}e(-(n+k)/3)r_{2}(t) \right)$ is holomorphic on
$R_{2}(S) \backslash Q_{\delta}$ for any $S>\pi/3$ and any $0<\delta<1/6$.
\end{lemma}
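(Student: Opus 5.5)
The plan is to write $t=x-is/\pi$ with $x,s$ real, so that $\pi t=\pi x-is$. Then compute the modulus of each factor of $r_{1}$ and $r_{2}$ explicitly, and finally read off part (b) from the resulting bounds, with the boundary $s=0$ treated separately.

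For part (a), the exponential factor is handled directly. Since $-i\pi(2n-k)t=-i\pi(2n-k)x-(2n-k)s$, we get
\[
\left| e^{-i\pi(2n-k)t} \right| = e^{-(2n-k)s}.
\]
For the cosine factors, put $a=\pi/3-\pi x$. For any real $a$ and $s$ we have the identity
\[
|2\cos(a\pm is)|^{2}=4\left(\cos^{2}a\cosh^{2}s+\sin^{2}a\sinh^{2}s\right)=4\cos^{2}a+4\sinh^{2}s .
\]

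For $r_{1}$ we have $0\le x\le 1/3$, so $a\in[0,\pi/3]$ and $\cos a\ge 1/2$. The identity then gives $|2\cos(\pi/3-\pi t)|^{2}\ge 1+4\sinh^{2}s\ge 1$. Raising to the power $-k$ yields \eqref{eq:lem31-3a}. As a by-product, $r_{1}$ is holomorphic on $R_{1}(S)$, because the cosine never vanishes there.

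For $r_{2}$ we have $0\le x\le 1/6$, so $b=\pi/3+\pi x\in[\pi/3,\pi/2]$ and $\cos b\le 1/2$. The identity gives $|2\cos(\pi/3+\pi t)|^{2}\le 1+4\sinh^{2}s=|d(s)|^{2}$. It therefore remains to prove $|d(s)|\le e^{2s/\sqrt3}$, and this is the only non-routine step. I would do it by differentiating the logarithm. Write $|d(s)|^{2}=2\cosh 2s-1$. Its logarithmic derivative is $4\sinh 2s/(2\cosh 2s-1)$, and we need this to be at most $4/\sqrt3$. Setting $c=\cosh 2s\ge1$, the required inequality $\sqrt3\sinh 2s\le 2c-1$ is equivalent, after squaring (both sides are nonnegative), to
\[
3(c^{2}-1)\le 4c^{2}-4c+1 \iff 0\le (c-2)^{2}.
\]
This holds, so integrating from $s=0$, where $|d(0)|=1$, gives the bound. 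Combined with the exponential factor, this yields \eqref{eq:lem31-3b}.

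For part (b), let $w=c\,r_{j}(t)$, where $c=\varepsilon_{k}e(\pm(n+k)/3)$ is a constant of modulus $1$. The function $\Log(1-w)$ is holomorphic wherever $w$ is holomorphic and $w\notin[1,\infty)$. For $s>0$, part (a) gives $|w|<1$; here we use that $2n-k>0$ and $(2n-k)-2k/\sqrt3\ge(3-2/\sqrt3)k>0$ because $n\ge 2k$. On the segment $s=0$, the bounds above give $|r_{1}(x)|=(2\cos a)^{-k}\le 1$ and $|r_{2}(x)|=(2\cos b)^{k}\le1$. Equality holds only when $\cos a=1/2$, respectively $\cos b=1/2$, that is, only at $x=0$, where $r_{j}(0)=1$ and hence $w=c$.

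Consequently, the only point of $R_{j}(S)$ at which $w$ could lie in $[1,\infty)$ is $t=0$, and that happens exactly when $c=1$. If $c\neq1$, then $1-w$ avoids $(-\infty,0]$ on the compact set $R_{j}(S)$. By continuity it also avoids $(-\infty,0]$ on an open neighbourhood of $R_{j}(S)$, which gives holomorphy. If $c=1$, the point $t=0$ is removed together with $Q_{\delta}$, and the same argument applies on $R_{j}(S)\setminus Q_{\delta}$. The condition $S>\pi/3$ plays no role here beyond ensuring that $Q_{\delta}\subset R_{j}(S)$.
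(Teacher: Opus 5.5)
Your proposal is correct and follows the paper's proof essentially step for step: the modulus identity $|2\cos(a\pm is)|^{2}=4\cos^{2}a+4\sinh^{2}s$ for both factors in part (a) (you also justify $|2\cos(\pi/3+\pi t)|\le |d(s)|$, which the paper simply asserts), the reduction of $|d(s)|\le e^{2s/\sqrt{3}}$ to $(\cosh 2s-2)^{2}\ge 0$ via the logarithmic derivative, and for part (b) the fact that $|r_{j}(t)|<1$ except at $t=0$. The only difference is cosmetic: the paper phrases (b) as $\Real\bigl(1-\varepsilon_{k}e(\pm(n+k)/3)r_{j}(t)\bigr)>0$, whereas you say the function avoids the branch cut and add a compactness argument to get holomorphy on an open neighbourhood, a step the paper leaves implicit.
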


\begin{proof}
(a) Writing $t=x-is/\pi$ with $s \geq 0$, we have
\[
\left| e^{-i\pi(2n-k)t} \right|=e^{-(2n-k)s}.
\]

If $0 \leq x \leq 1/3$, then
\[
\left| 2 \cos ( \pi/3-\pi t ) \right|^{2}
=4 \left( \cos^{2}(\pi/3-\pi x) + \sinh^{2}(s) \right) \geq 1.
\]

Hence \eqref{eq:lem31-3a} holds.

We proceed similarly for $r_{2}(t)$.
First observe that
\[
\frac{d}{ds} \log|d(s)|
= \frac{2\sinh2s}{2\cosh2s-1} \leq \frac{2}{\sqrt{3}}.
\]

The square of the inequality on the right follows immediately from
$(\cosh(2s)-2)^{2} \geq 0$ and since both sides of the inequality above are
non-negative it follows that the inequality itself is true too.
Integrating from $0$ to $s$ and using $|d(0)|=1$, $d/ds \log|d(s)| \leq 2/\sqrt{3}$
gives $\log |d(s)| \leq 2s/\sqrt{3}$. Hence
\begin{equation}
\label{eq:ds-UB}
|d(s)| \leq e^{2s/\sqrt{3}}.
\end{equation}

If $0 \leq x \leq 1/6$, then $\left| 2 \cos ( \pi/3+\pi t ) \right|
\leq |d(s)| \leq e^{2s/\sqrt{3}}$.
Hence \eqref{eq:lem31-3b} holds.

\vspace*{1.0mm}

(b) We first observe that the two integrals in \eqref{eq:lem41-CG-3} are improper
integrals when $\varepsilon_{k} e(\pm (n+k)/3)=1$ due to a logarithmic singularity
at $t=0$, since $r_{1}(0)=r_{2}(0)=1$. So we must write the two integrals in
\eqref{eq:lem41-CG-3} as
\[
\lim_{\delta \rightarrow 0+} \int_{\delta}^{1/3} \log \left| 1-\varepsilon_{k}e((n+k)/3)r_{1}(t) \right| \,dt
\text{ and }
\]
\[
\lim_{\delta \rightarrow 0+} \int_{\delta}^{1/6} \log \left| 1-\varepsilon_{k}e(-(n+k)/3)r_{2}(t) \right| \,dt.
\]

Since $r_{1}(0)=r_{2}(0)=1$ and $r_{1}'(0)=r_{2}'(0)=
-\pi \left( \sqrt{3} \, k +i(2n-k) \right) \neq 0$, as $t \rightarrow 0$,
\begin{equation}
\label{eq:lem31-2}
1-r_{i}(t)=\pi \left( \sqrt{3} \, k +i(2n-k) \right) t + O \left( t^{2} \right),
\end{equation}
for $i=1,2$. Therefore, $\log  \left| 1-r_{i}(t) \right|=\log (t)+O(1)$, which
is integrable at $0$. Hence both improper integrals in \eqref{eq:lem41-CG-3} converge.

Also
\[
\log \left| 1-\varepsilon_{k} e((n+k)/3) r_{1}(t) \right|
= \Real \left( \Log \left( 1-\varepsilon_{k} e((n+k)/3) r_{1}(t) \right) \right)
\]
and
\[
\log \left| 1-\varepsilon_{k} e(-(n+k)/3) r_{2}(t) \right|
= \Real \left( \Log \left( 1-\varepsilon_{k} e(-(n+k)/3) r_{2}(t) \right) \right).
\]

So we can write
\begin{align}
\label{eq:lem41-CG-3a}
\Delta_{n,k}^{(j)}
= & \frac{2}{k} \Real \left\{ \int_{0}^{1/3} \Log \left( 1-\varepsilon_{k}e((n+k)/3)r_{1}(t) \right) \,dt \right. \\
& \hspace*{7.0mm} \left. + \int_{0}^{1/6} \Log \left( 1-\varepsilon_{k}e(-(n+k)/3)r_{2}(t) \right) \,dt
\right\}. \nonumber
\end{align}

The argument above shows that, when these integrals are improper, their real parts
are integrable. Since $\left| \Imag \Log(z) \right| \leq \pi$, the imaginary
part is bounded and the whole complex integral converges.

Next we want to replace the paths of integration with new paths so we need to
check that the necessary conditions hold.

Since $n \geq 2k$, the upper bounds in both \eqref{eq:lem31-3a} and \eqref{eq:lem31-3b}
are strictly less than $1$ when $s>0$. On $s=0$, $\left| r_{1}(t) \right|$ and
$\left| r_{2}(t) \right|$ are both strictly less than $1$ except at $t=0$, where
$r_{1}(0)=r_{2}(0)=1$.

Since $\left| \varepsilon_{k} e(\pm (n+k)/3) \right|=1$, for $t \in R_{1}(S)$
(recall that $R_{1}(S)$ excludes a small quarter disk around $0$ when
$\varepsilon_{k} e((n+k)/3)=1$) with $t \neq 0$,
\begin{equation}
\label{eq:lem1-1}
\Real \left( 1-\varepsilon_{k} e((n+k)/3) r_{1}(t) \right)
\geq 1-\left| r_{1}(t) \right|>0.
\end{equation}
At $t=0$, in the nonsingular case, $\Real \left( 1- \varepsilon_{k} e(\pm (n+k)/3) \right)
=1-\Real \left( \varepsilon_{k} e(\pm (n+k)/3) \right)>0$. In the singular case,
$t=0$ has been removed.

Similarly, for $t \in R_{2}(S)$ with $t \neq 0$,
\begin{equation}
\label{eq:lem1-2}
\Real \left( 1-\varepsilon_{k} e(-(n+k)/3) r_{2}(t) \right)
\geq 1-\left| r_{2}(t) \right|>0.
\end{equation}

Moreover, $r_{1}(t)$ and $r_{2}(t)$ are holomorphic on $R_{1}(S)$ and $R_{2}(S)$,
respectively: $r_{2}(t)$ is entire, while the estimate in part~(a) shows that
the denominator in $r_{1}(t)$ never vanishes on $R_{1}(S)$.
Therefore, part~(b) follows.
\end{proof}

\begin{lemma}
\label{lem:actual-1}
For relatively prime positive integers $n$ and $k$ with $n \geq 2k$, and
$j \in \{ 1,2,3 \}$,
\begin{equation}
\label{eq:1}
\Delta_{n,k}^{(j)}=\frac{2}{\pi k}\int_{0}^{\infty}
\Imag \left\{ \Log \left( 1-\varepsilon_{k} e^{2 \pi i(n+k)/3}u(s) \right)
+ \Log \left( 1-\varepsilon_{k} e^{-2 \pi i(n+k)/3}v(s) \right) \right\} \, ds.
\end{equation}
\end{lemma}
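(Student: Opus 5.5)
The plan is to start from the complex form \eqref{eq:lem41-CG-3a} of Lemma~\ref{lem:new-1}, and to apply Cauchy's theorem to each of the two integrals, using the holomorphy given by Lemma~\ref{lem:new-2}(b). For the first integral, integrate $\Log\left(1-\varepsilon_{k}e((n+k)/3)r_{1}(t)\right)$ around the boundary of $R_{1}(S)$, with $Q_{\delta}$ removed in the singular case. This expresses $\int_{0}^{1/3}$ along the real axis as the sum of three pieces:
\begin{itemize}
\item the integral down the left side $t=-is/\pi$, $0\le s\le S$;
\item the integral along the bottom side $\Imag t=-S/\pi$;
\item the integral up the right side $t=1/3-is/\pi$.
\end{itemize}
The second integral is treated the same way on $R_{2}(S)$, whose right side is $t=1/6-is/\pi$.

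Next I identify the left-side integrands. At $t=-is/\pi$ we have $2\cos(\pi/3+is)=\cosh s-i\sqrt{3}\sinh s=\overline{d(s)}$, so $r_{1}(-is/\pi)=u(s)$. Similarly $2\cos(\pi/3-is)=d(s)$ gives $r_{2}(-is/\pi)=v(s)$. Since $dt=-i\,ds/\pi$, the real part of each left-side contribution is $\frac{1}{\pi}\int_{0}^{S}\Imag\Log(\cdots)\,ds$. This is exactly the shape of \eqref{eq:1} once $S\to\infty$ and the factor $2/k$ is included.

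The right sides should contribute zero real part, because the argument of the Log there is real. For $r_{1}$ at $x=1/3$, we have $2\cos(\pi/3-\pi t)=2\cosh s>0$. The total phase is $e((n+k)/3)e^{-i\pi(2n-k)/3}=e^{i\pi k}=(-1)^{k}$. Hence $1-\varepsilon_{k}e((n+k)/3)r_{1}(t)$ is real, and by \eqref{eq:lem1-1} it is positive, so its Log is real. Multiplying by $dt=-i\,ds/\pi$ then gives a purely imaginary contribution. For $r_{2}$ at $x=1/6$, we have $2\cos(\pi/2-is)=2i\sinh s$. The phase is $i^{k}e^{-i\pi(2n-k)/6}e^{-2\pi i(n+k)/3}=e^{-i\pi n}=(-1)^{n}$, which is again real, so the same argument applies.

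Finally, the bottom sides must vanish as $S\to\infty$, and this is where Lemma~\ref{lem:new-2}(a) enters. It gives $|r_{1}|\le e^{-(2n-k)S}$ and $|r_{2}|\le e^{-((2n-k)-2k/\sqrt3)S}$, and both exponents are positive since $n\ge 2k$. Hence the Log is $O(e^{-cS})$ on a segment of bounded length. The same bounds show that the left-side integrals converge absolutely on $[0,\infty)$. In the singular case I must also check that the quarter-circle arc of $Q_{\delta}$ contributes $o(1)$ as $\delta\to0$. By \eqref{eq:lem31-2}, $\Log(1-r_{i}(t))=\log t+O(1)$ with bounded imaginary part, so the arc integral is $O(\delta\log(1/\delta))$. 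The pieces of the axes inside $Q_{\delta}$ are handled by the convergence of the improper integrals shown in Lemma~\ref{lem:new-2}.

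I expect the main obstacle to be the careful bookkeeping of orientation and branch in the singular case. One must make sure that the principal $\Log$ stays continuous along the indented contour, which is guaranteed by the positivity of the real part in \eqref{eq:lem1-1} and \eqref{eq:lem1-2}. One must also confirm that the sign from $dt=-i\,ds/\pi$ produces $+\Imag$ rather than $-\Imag$ in \eqref{eq:1}.
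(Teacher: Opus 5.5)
Your proposal is correct and follows essentially the same route as the paper. Both apply Cauchy's theorem on $R_{1}(S)$ and $R_{2}(S)$, with a quarter-circle indentation at $0$ in the singular case: the left sides produce $u(s)$ and $v(s)$ with the correct $+\Imag$ sign, the right sides contribute nothing to the real part because the Log arguments there are real and positive, and the bottom sides and the arc vanish in the limit. Your explicit phase computations giving $(-1)^{k}$ and $(-1)^{n}$ on the right sides simply fill in details the paper states directly.
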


\begin{proof}
Equations~\eqref{eq:lem1-1} and \eqref{eq:lem1-2} also show that for
$t$ in $R_{1}(S)$ or $R_{2}(S)$, as appropriate,
$1-\varepsilon_{k} e((n+k)/3) r_{1}(t)$ and
$1-\varepsilon_{k} e(-(n+k)/3) r_{2}(t)$ are in the open right half-plane,
so neither of them vanish or cross the branch cut of the principal branch
of the logarithm function.

We are now able to modify both integration paths in \eqref{eq:lem41-CG-3a}
(although starting each path at $\delta>0$ in the case of the singularity at
$t=0$) by applying Cauchy's theorem.

For the first integral in \eqref{eq:lem41-CG-3a}, let $S>\pi/3$. If there is no
singularity at $t=0$, let $\delta'=0$. Otherwise, let $0<\delta<1/6$,
$\delta'=-\delta$ and start with
the path from $\delta$ circling clockwise to $-\delta i$.
\[
\delta' i \rightarrow -iS/\pi \rightarrow 1/3-iS/\pi \rightarrow 1/3,
\]
The value of the first integral in \eqref{eq:lem41-CG-3a}, starting at $\delta>0$
in the case of the singularity at $t=0$, will be the
value of the integral along this path by Cauchy's theorem, since their
paths of integration have the same endpoints.

Similarly, for the second integral, use an initial circular path if there is a
singularity at $t=0$, followed by
\[
\delta' i \rightarrow -iS/\pi \rightarrow 1/6-iS/\pi \rightarrow 1/6.
\]
The value of the second integral in \eqref{eq:lem41-CG-3a}, starting at $\delta>0$
in the case of the singularity at $t=0$, will be the value
of the integral along this path.

Now we consider each of these parts of these paths.

\vspace*{2.0mm}

\noindent
$\bullet$ quarter-circles: recall \eqref{eq:lem31-2},
\[
1-r_{i}(t)=\pi \left( \sqrt{3} \, k +i(2n-k) \right) t + O \left( t^{2} \right),
\]
so $\left| \Log \left( 1-r_{i}(t) \right) \right|=O \left( \left| \log \delta \right| \right)$.
Because the length of this part of the path is $\pi \delta /2$, we find that the
integral over this path is $O \left( \delta \left| \log \delta \right| \right)
\rightarrow 0$ as $\delta \rightarrow 0$.

\vspace*{2.0mm}

\noindent
$\bullet$ left vertical sides: here we can write $t=-is/\pi$. From
\eqref{eq:CG-4b}, we have
\begin{align*}
r_{1}(-is/\pi)
&= \left( 2 \cos \left( \pi/3+is \right) \right)^{-k}e^{-(2n-k)s}\\
&= \left( \cosh (s) -i \sqrt{3} \sinh (s) \right)^{-k}e^{-(2n-k)s}\\
&= \overline{d(s)}^{-k} e^{-(2n-k)s}=u(s).
\end{align*}

Similarly, from \eqref{eq:CG-5b}, we have
\[
r_{2}(-is/\pi)=v(s).
\]

Hence
\[
\Real \int_{\delta'}^{-iS/\pi}
\Log \left( 1- \varepsilon_{k} e((n+k)/3) r_{1}(t) \right) dt
=\frac{1}{\pi} \int_{-\delta' \pi}^{S}
\Imag \left( \Log \left( 1- \varepsilon_{k} e((n+k)/3) u(s) \right) \right) ds
\]
and
\[
\Real \int_{\delta'}^{-iS/\pi}
\Log \left( 1- \varepsilon_{k} e(-(n+k)/3) r_{2}(t) \right) dt
=\frac{1}{\pi} \int_{-\delta' \pi}^{S}
\Imag \left( \Log \left( 1- \varepsilon_{k} e(-(n+k)/3) v(s) \right) \right) ds.
\]

\vspace*{2.0mm}

\noindent
$\bullet$ right vertical sides: the quantities subtracted from $1$ are
$\varepsilon_{k}(-1)^{k}e^{-(2n-k)s} \left( 2 \cosh (s) \right)^{-k}$ and
$\varepsilon_{k}(-1)^{n}e^{-(2n-k)s} \left( 2 \sinh (s) \right)^{k}$.
They are real with absolute value less than $1$. For instance,
\[
\left| \varepsilon_{k}(-1)^{n}e^{-(2n-k)s} \left( 2 \sinh (s) \right)^{k} \right|
\leq \left( e^{-3s}  2 \sinh (s) \right)^{k}
= \left( e^{-2s}-e^{-4s} \right)^{k}<1,
\]
since $2n-k \geq 3k$. So the corresponding principal
logarithms are real. So their vertical integrals have zero real part.

\vspace*{2.0mm}

\noindent
$\bullet$ lower horizontal sides: from \eqref{eq:lem31-3a} and \eqref{eq:lem31-3b}, we have
\[
\left| r_{1}(t) \right| \leq e^{-(2n-k)S}
\quad \text{ and } \quad
\left| r_{2}(t) \right| \leq e^{-((2n-k)-2k/\sqrt{3})S}.
\]

For large $S$, these are both at most $1/2$ and go to $0$ exponentially fast
with $S$. For any $z$ with $|z|<1/2$, we have
\[
\left| \Log (1-z) \right| \leq \frac{|z|}{1-|z|} \leq 2|z|.
\]
Applied here, we see that the absolute value of the integrands goes to $0$
exponentially fast with $S$. Since the horizontal segments have fixed lengths,
both lower horizontal integrals go to $0$ as $S \rightarrow \infty$.

We now combine the above information.
In the improper integral case, we first let $\delta \rightarrow 0^{+}$.
In both cases, we then let $S \rightarrow \infty$.
So the only contribution comes from the left vertical sides, from which we
obtain \eqref{eq:1}.
\end{proof}

\section{An Intermediate Estimate}
\label{sect:intermediate}

Put
\[
w(s)=e^{-((2n-k)-i\sqrt{3}k)s}.
\]

Let
\[
\Lambda_{n,k}^{(j)}=\frac{2}{\pi k}\int_{0}^{\infty}
\Imag \left\{ \Log \left( 1-\varepsilon_{k} e^{2 \pi i(n+k)/3}w(s) \right)
+ \Log \left( 1-\varepsilon_{k} e^{-2 \pi i(n+k)/3}w(s) \right) \right\} \, ds.
\]
That is, $\Lambda_{n,k}^{(j)}$ denotes the expression \eqref{eq:1} with both $u$
and $v$ replaced by $w$.

The rationale for the use of $w(s)$ comes from the series expansions of $u(s)$
and $v(s)$ around $s=0$. The expansions for $u(s)$ and $v(s)$
start similarly, $u(s)-v(s)=-4ks^{2}+O \left( s^{3} \right)$. $w(s)$ is between
them with $u(s)-w(s)=-2ks^{2}+O \left( s^{3} \right)$ and
$v(s)-w(s)=2ks^{2}+O \left( s^{3} \right)$. Moreover, since the $s^{2}$ coefficients
in these differences are negatives of one another, they give
$2qks^{2}w(s)^{q} \left( e^{-iq\theta}-e^{iq\theta} \right)$, where
$\theta=2\pi (n+k)/3$, which directly
gives the $n^{-3}$ terms in Theorem~\ref{thm:2.1}.

\begin{lemma}
\label{lem:est1}
For relatively prime positive integers $n$ and $k$ with $n \geq 2k$, and
$j \in \{ 1,2,3 \}$,
\begin{equation}
\label{eq:3a}
\Delta_{n,k}^{(j)}-\Lambda_{n,k}^{(j)}
= \frac{16}{\pi} \Real \left( \frac{1}{(2n-k-i\sqrt{3}k)^{3}} \right)
\sum_{q \geq 1} \frac{\varepsilon_{k}^{q} \sin (2\pi q(n+k)/3)}{q^{3}}
+R_{n,k}^{(j)},
\end{equation}
where
\begin{equation}
\label{eq:3b}
\left| R_{n,k}^{(j)} \right|
\leq \frac{156}{(2n-k)^{4}}.
\end{equation}
\end{lemma}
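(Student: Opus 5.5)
The plan is to expand both logarithms in \eqref{eq:1} and in $\Lambda_{n,k}^{(j)}$ as power series and compare term by term. For $s>0$, Lemma~\ref{lem:new-2}(a) gives $|u(s)|,|v(s)|<1$, and also $|w(s)|=e^{-(2n-k)s}<1$. Write $\theta=2\pi(n+k)/3$. Then $\Log(1-\varepsilon_k e^{\pm i\theta}z)=-\sum_{q\ge1}\varepsilon_k^q e^{\pm iq\theta}z^q/q$ for $z\in\{u,v,w\}$, and so
\[
\Delta_{n,k}^{(j)}-\Lambda_{n,k}^{(j)}=-\frac{2}{\pi k}\int_0^\infty \Imag\sum_{q\ge1}\frac{\varepsilon_k^q}{q}\Bigl(e^{iq\theta}\bigl(u^q-w^q\bigr)+e^{-iq\theta}\bigl(v^q-w^q\bigr)\Bigr)\,ds .
\]
To interchange sum and integral I will integrate over $[\delta,\infty)$ and let $\delta\to0^+$, as in Lemma~\ref{lem:actual-1}. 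The differences $u^q-w^q$ and $v^q-w^q$ carry a factor $s^2$, which kills the logarithmic singularity at $s=0$. With this factor the resulting double series converges absolutely, since it is dominated by $\sum_q q^{-4}$ times powers of $(2n-k)^{-1}$.

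Next, factor $u^q=w^q(u/w)^q$ and $v^q=w^q(v/w)^q$, where $u/w=\exp(-kg(s))$ and $v/w=\exp(kh(s))$ with
\[
g(s)=\Log\bigl(\cosh s-i\sqrt3\sinh s\bigr)+i\sqrt3 s,\qquad h(s)=\Log d(s)-i\sqrt3 s .
\]
A short Taylor computation gives $g(s)=2s^2+O(s^3)$ and $h(s)=2s^2+O(s^3)$; these are conjugates of each other. The leading parts therefore combine to
\[
2qks^2w^q\bigl(e^{-iq\theta}-e^{iq\theta}\bigr)=-4iqks^2w^q\sin(q\theta).
\]
Inserting this and using $\int_0^\infty s^2e^{-qas}\,ds=2/(q^3a^3)$ with $a=2n-k-i\sqrt3k$ (so $\Real a>0$), the main term becomes
\[
\frac{2}{\pi k}\sum_q 4k\,\varepsilon_k^q\sin(q\theta)\,\Real\frac{2}{q^3a^3},
\]
which is exactly the displayed term in \eqref{eq:3a}.

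The remainder $R_{n,k}^{(j)}$ collects two kinds of error. The first is the cubic Taylor error of $g,h$, contributing $O(qk s^3|w|^q)$. The second is the quadratic error $|e^{x}-1-x|\le\frac{|x|^2}{2}e^{|x|}$ with $x=-qkg$ or $qkh$, contributing $O(q^2k^2s^4|w|^q e^{qk|g|})$. I plan to bound these uniformly in $s$, not just for small $s$. For this I will use explicit global bounds such as $|g(s)|,|h(s)|\le\min(2s^2,\,c\,s)$. I will also use the inequality $\log|d(s)|\le 2s/\sqrt3$ from \eqref{eq:ds-UB}, so that $|v|^q\le e^{-q(2n-k-2k/\sqrt3)s}$. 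Because $n\ge2k$ gives $k\le(2n-k)/3$, this decay rate is at least $(1-\tfrac{2}{3\sqrt3})(2n-k)$, and $u$ is even better since $|\overline{d}|^{-k}\le1$. Integrating gives bounds of the form
\[
\frac{k}{q^3(2n-k)^4}\quad\text{and}\quad \frac{k^2}{q^3(2n-k)^5}\le\frac{k}{3q^3(2n-k)^4}.
\]
After multiplying by $2/(\pi k)$ and summing over $q$ with $\zeta(3)$, this yields $C/(2n-k)^4$.

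The main obstacle is producing fully explicit, uniform-in-$s$ bounds on $g$, $h$ and on the exponential remainder, sharp enough to reach the constant $156$. Near $s=0$ the Taylor bounds are fine, but for large $s$ one must switch to crude bounds using the exponential decay of $w^q$ and of $u^q,v^q$ separately. I would first try splitting at $s=1$: on $[0,1]$ use Taylor bounds with explicit third derivatives; on $[1,\infty)$ bound $|u^q-w^q|+|v^q-w^q|$ directly by $3e^{-cq(2n-k)s}$, which is exponentially small and easily absorbed into the $(2n-k)^{-4}$ bound.
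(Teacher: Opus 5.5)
Your route is the paper's route. You expand both logarithms, write $u=we^{-kg}$ and $v=we^{kh}$ (your $-g$ and $h$ are the paper's $g_{-}$ and $g_{+}$), and extract the $\mp 2qks^{2}$ terms. Your main-term computation is correct. The $\delta$-truncation is harmless but unnecessary: $\sum_{q}q^{-1}\int_{0}^{\infty}(|u|^{q}+|v|^{q}+2|w|^{q})\,ds<\infty$ already justifies termwise integration.

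The gap is in the one genuinely quantitative claim, the constant $156$. You leave it open, and the inequality you propose cannot deliver it. The bound $|e^{x}-1-x|\le\frac{|x|^{2}}{2}e^{|x|}$ also charges you for $\Imag x$, which is not small. Since $k$ can be as large as $(2n-k)/3$, even the optimal global bound $|g|=|h|\le 2s$ leaves only the decay rate $q(2n-k)/3$. Each of the two quadratic-error integrals then picks up a factor $3^{5}=243$, where the paper's factors are $1$ and about $11.4$, so your constant lands in the thousands. Splitting at $s=1$ does not cure this. On $[0,1]$ the factor $e^{2qks^{2}}$ can still cancel two thirds of the exponent $q(2n-k)s$ near $s=1$, and that region matters when $q(2n-k)$ is moderate.

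The missing idea is to let only the real part of the exponent enter. From $e^{z}-1-z=z^{2}\int_{0}^{1}(1-t)e^{tz}\,dt$ you get $|e^{z}-1-z|\le\frac{|z|^{2}}{2}e^{\max(\Real z,0)}$. Here $\Real(-kg)=-k\log|d(s)|\le0$, while $\Real(kh)=k\log|d(s)|\le 2ks/\sqrt{3}$ by \eqref{eq:ds-UB}. So the $u$-side remainder carries no exponential factor at all. The $v$-side carries only $e^{2qks/\sqrt{3}}$, which costs just $(1-\frac{2}{3\sqrt{3}})^{-5}\approx11.36$.

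For the Taylor part, $h''=4/d^{2}$ and $h'''=-8d'/d^{3}$ with $|d|\ge1$ give $|h''|\le4$ and $|h'''|\le8\sqrt{3}$ on all of $[0,\infty)$. Hence $|h|\le 2s^{2}$ and $|h-2s^{2}|\le\frac{4\sqrt{3}}{3}s^{3}$ hold globally, and the same holds for $g=\overline{h}$, so no splitting is needed.

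Keep the factor $1/q$ from the logarithmic series: all error terms are then $O(q^{-4})$, not $O(q^{-3})$. This matters, because with $\zeta(3)$ in place of $\zeta(4)$ the final constant would be about $172$, exceeding $156$. One then obtains
\[
\left|R_{n,k}^{(j)}\right|\le\frac{32\zeta(4)}{\pi(2n-k)^{4}}\Bigl[\sqrt{3}+1+\Bigl(1-\frac{2}{3\sqrt{3}}\Bigr)^{-5}\Bigr]<\frac{156}{(2n-k)^{4}}.
\]
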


\begin{proof}
Because $\Real(d(s))=\cosh(s)>0$, the principal logarithm
\[
h(s)=\Log(d(s))
\]
is well-defined for $s \geq 0$. Direct differentiation gives
\[
h'(s)=\frac{d'(s)}{d(s)},
\qquad
h''(s)=\frac{4}{d(s)^{2}},
\qquad
h'''(s)=-\frac{8d'(s)}{d(s)^{3}}.
\]

Moreover,
\[
|d(s)|^{2}
=\cosh^{2}s+3\sinh^{2}s
=1+4\sinh^{2}s,
\]
and
\[
|d'(s)|^{2}
=\sinh^{2}s+3\cosh^{2}s
=3+4\sinh^{2}s.
\]
Hence
\[
\left| h''(s) \right| \leq 4,
\]
and
\[
\left| h'''(s) \right|
= 8\frac{\sqrt{3+4\sinh^{2}s}}{(1+4\sinh^{2}s)^{3/2}}
\leq 8\sqrt{3}.
\]

Since
\[
h(0)=0, \qquad h'(0)=i\sqrt{3}, \qquad h''(0)=4,
\]
Taylor's theorem gives
\begin{equation}
\label{eq:lem51-1}
\left| h(s)-i\sqrt{3}\,s-2s^{2} \right|
\leq \frac{4\sqrt{3}}{3}s^{3}.
\end{equation}

Using only $|h''(s)| \leq 4$ also gives
\begin{equation}
\label{eq:lem51-2}
\left| h(s)-i\sqrt{3}\,s \right| \leq 2s^{2}.
\end{equation}

Set
\[
g_{+}(s)=h(s)-i\sqrt{3}\,s,
\qquad
g_{-}(s)=-\overline{h(s)}-i\sqrt{3}\,s.
\]
Then
\[
g_{-}(s)=-\overline{g_{+}(s)},
\]
and
\begin{equation}
\label{eq:lem51-4}
v(s)=w(s)e^{kg_{+}(s)}, \qquad u(s)=w(s)e^{kg_{-}(s)}.
\end{equation}

By \eqref{eq:lem51-1},
\begin{equation}
\label{eq:lem51-5}
\left| g_{+}(s)-2s^{2} \right|, \ \left| g_{-}(s)+2s^{2} \right| \leq \frac{4\sqrt{3}}{3}s^{3},
\end{equation}
while \eqref{eq:lem51-2} gives
\begin{equation}
\label{eq:lem51-6}
\left| g_{\pm}(s) \right| \leq 2s^{2}.
\end{equation}

Furthermore,
\begin{equation}
\label{eq:lem51-7}
\Real \left( g_{+}(s) \right)=\log|d(s)|
\quad \text{ and } \quad \Real \left( g_{-}(s) \right) =-\log|d(s)| \leq 0.
\end{equation}

For $s>0$,
\[
|u(s)| \leq e^{-(2n-k)s}, \qquad |w(s)|=e^{-(2n-k)s},
\]
and, by \eqref{eq:ds-UB},
\[
|v(s)| \leq e^{-((2n-k)-2k/\sqrt{3})s}.
\]

Since $n \geq 2k$, we have $2n-k \geq 3k$, and therefore
\[
(2n-k)-\frac{2k}{\sqrt{3}}>0.
\]

Consequently the logarithmic series may be integrated termwise, because
\begin{align*}
&\sum_{q \geq 1} \frac{1}{q}
\int_{0}^{\infty}
\left( |u(s)|^{q}+|v(s)|^{q}+2|w(s)|^{q} \right) \, ds\\
&\qquad \leq \zeta(2) \left( \frac{3}{2n-k} + \frac{1}{(2n-k)-2k/\sqrt{3}} \right)
<\infty. \nonumber
\end{align*}

Write
\[
\vartheta=\frac{2\pi(n+k)}{3}.
\]

Using the logarithmic series in the exact contour identity and in the definition
of $\Lambda_{n,k}^{(j)}$, and using \eqref{eq:lem51-4}, we obtain
\begin{align}
\label{eq:lem51-9}
\Delta_{n,k}^{(j)}-\Lambda_{n,k}^{(j)}
= &-\frac{2}{\pi k} \sum_{q \geq 1}\frac{\varepsilon_{k}^{q}}{q}
\int_{0}^{\infty} \Imag \left[ w(s)^{q} \right. \\
& \left. \qquad \times
\left\{
 e^{iq\vartheta}(e^{qkg_{-}(s)}-1)
 +e^{-iq\vartheta}(e^{qkg_{+}(s)}-1)
\right\}
\right] \, ds. \nonumber
\end{align}

For every complex number $z$,
\[
e^{z}-1-z=z^{2} \int_{0}^{1} (1-t)e^{tz} \, dt,
\]
and hence
\begin{equation}
\label{eq:lem51-10}
\left| e^{z}-1-z \right| \leq \frac{|z|^{2}}{2} e^{\max(\Real(z),0)}.
\end{equation}

For $g_{-}(s)$, \eqref{eq:lem51-5}--\eqref{eq:lem51-7} and \eqref{eq:lem51-10} give
\begin{align}
\label{eq:lem51-11}
\left| e^{qkg_{-}(s)}-1+2qks^{2} \right|
& \leq qk \left| g_{-}(s)+2s^{2} \right|
       + \left| e^{qkg_{-}(s)}-1-qkg_{-}(s) \right| \\
& \leq \frac{4\sqrt{3}}{3}qks^{3}
       + \frac{1}{2}q^{2}k^{2}|g_{-}(s)|^{2} \nonumber \\
& \leq \frac{4\sqrt{3}}{3}qks^{3} +2q^{2}k^{2}s^{4}. \nonumber
\end{align}

For $g_{+}(s)$, \eqref{eq:ds-UB}, \eqref{eq:lem51-5}, \eqref{eq:lem51-6}
and \eqref{eq:lem51-10} give
\begin{align}
\label{eq:lem51-12}
\left| e^{qkg_{+}(s)}-1-2qks^{2} \right|
& \leq qk \left| g_{+}(s)-2s^{2} \right|
       + \left| e^{qkg_{+}(s)}-1-qkg_{+}(s) \right| \\
& \leq \frac{4\sqrt{3}}{3}qks^{3} + \frac{1}{2} q^{2}k^{2} \left| g_{+}(s) \right|^{2} e^{qk \Real(g_{+}(s))} \nonumber \\
& \leq \frac{4\sqrt{3}}{3}qks^{3} + 2q^{2}k^{2}s^{4}e^{2qks/\sqrt{3}}. \nonumber
\end{align}

The quadratic terms extracted in \eqref{eq:lem51-11} and \eqref{eq:lem51-12} contribute
\begin{align}
\label{eq:lem51-13}
&-\frac{2}{\pi k}
\sum_{q \geq 1}\frac{\varepsilon_{k}^{q}}{q}
\int_{0}^{\infty}
\Imag \left[
2qks^{2}w(s)^{q}
\left( e^{-iq\vartheta}-e^{iq\vartheta} \right)
\right] ds\\
= & \frac{8}{\pi}
\sum_{q \geq 1}
\varepsilon_{k}^{q}\sin(q\vartheta)
\int_{0}^{\infty}
s^{2}\Real \left( w(s)^{q} \right) \, ds. \nonumber
\end{align}

Since
\[
w(s)^{q}=e^{-q(2n-k-i\sqrt{3}\,k)s},
\]
we have
\begin{equation}
\label{eq:lem51-14}
\int_{0}^{\infty} s^{2}w(s)^{q}\,ds
=\frac{2}{q^{3}(2n-k-i\sqrt{3}\,k)^{3}}.
\end{equation}

Substituting \eqref{eq:lem51-14} into \eqref{eq:lem51-13} gives
\[
\frac{16}{\pi}
\Real \left( \frac{1}{(2n-k-i\sqrt{3}\,k)^{3}} \right)
\sum_{q \geq 1}
\frac{\varepsilon_{k}^{q}\sin(q\vartheta)}{q^{3}},
\]
which is the asserted main term.

It remains to bound the contribution of the errors in \eqref{eq:lem51-11}
and \eqref{eq:lem51-12}. From \eqref{eq:lem51-9}, \eqref{eq:lem51-11} and
\eqref{eq:lem51-12},
\begin{align}
\label{eq:lem51-15}
\left| R_{n,k}^{(j)} \right|
\leq &
\frac{2}{\pi k}
\sum_{q \geq 1}\frac{1}q
\int_{0}^{\infty} e^{-q(2n-k)s}\\
&\quad \times
\left[
\frac{8\sqrt{3}}{3}qks^{3}
+2q^{2}k^{2}s^{4}
+2q^{2}k^{2}s^{4}e^{2qks/\sqrt{3}}
\right]ds. \nonumber
\end{align}

For $\lambda>0$,
\begin{equation}
\label{eq:lem51-16}
\int_{0}^{\infty} s^{3}e^{-q\lambda s}\,ds
=\frac{6}{q^{4}\lambda^{4}}
\quad \text{ and } \quad
\int_{0}^{\infty} s^{4}e^{-q\lambda s}\,ds
= \frac{24}{q^{5}\lambda^{5}}.
\end{equation}

Applying \eqref{eq:lem51-16} to \eqref{eq:lem51-15} gives
\begin{align}
\label{eq:lem51-17}
\left| R_{n,k}^{(j)} \right|
\leq & \frac{32\sqrt{3}\,\zeta(4)}{\pi(2n-k)^{4}}\\
     & + \frac{96k\zeta(4)}{\pi}
         \left\{ \frac{1}{(2n-k)^{5}} + \frac{1}{((2n-k)-2k/\sqrt{3})^{5}} \right\}. \nonumber
\end{align}

Finally, since $2n-k \geq 3k$,
\[
k \leq \frac{2n-k}{3},
\]
and
\[
(2n-k)-\frac{2k}{\sqrt{3}}
\geq \left(1-\frac{2}{3\sqrt{3}}\right)(2n-k).
\]

Thus \eqref{eq:lem51-17} yields
\begin{align*}
\left| R_{n,k}^{(j)} \right|
&\leq
\frac{32\zeta(4)}{\pi(2n-k)^{4}}
\left[ \sqrt{3}+1+\left(1-\frac{2}{3\sqrt{3}}\right)^{-5} \right]\\
&= \frac{155.3266\ldots}{(2n-k)^{4}}
< \frac{156}{(2n-k)^{4}}.
\end{align*}

Finally, for real $a,b$,
\[
\Real \left( \frac{1}{(a-ib)^{3}} \right)
=\frac{a(a^{2}-3b^{2})}{(a^{2}+b^{2})^{3}}.
\]

Taking $a=2n-k$ and $b=\sqrt{3}\,k$ gives
\[
\Real \left( \frac{1}{(2n-k-i\sqrt{3}\,k)^{3}} \right)
= \frac{(2n-k)\bigl((2n-k)^{2}-9k^{2}\bigr)}
{\bigl((2n-k)^{2}+3k^{2}\bigr)^{3}}.
\]
This completes the proof.
\end{proof}

\section{Proof of Theorem~\ref{thm:2.1}}
\label{sect:5}

\begin{lemma}
\label{lem:5.1}
Let $n$ and $k$ be relatively prime positive integers with $n \geq 2k$ and let
$j \in \{ 1,2,3 \}$. Then
\[
\Lambda_{n,k}^{(j)}
= \left\{
\begin{array}{ll}
-\dfrac{2\pi \sqrt{3}}{3((2n-k)^{2}+3k^{2})} & \text{if $\varepsilon_{k}=1$ and $3|(n+k)$},\\[2ex]
 \dfrac{2\pi \sqrt{3}}{9((2n-k)^{2}+3k^{2})} & \text{if $\varepsilon_{k}=1$ and $3 \nmid (n+k)$},\\[2ex]
 \dfrac{\pi \sqrt{3}}{3((2n-k)^{2}+3k^{2})}  & \text{if $\varepsilon_{k}=-1$ and $3|(n+k)$},\\[2ex]
-\dfrac{\pi \sqrt{3}}{9((2n-k)^{2}+3k^{2})}  & \text{if $\varepsilon_{k}=-1$ and $3 \nmid (n+k)$}.
\end{array}
\right.
\]
\end{lemma}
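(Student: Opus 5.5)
The plan is to evaluate $\Lambda_{n,k}^{(j)}$ exactly by expanding both logarithms as power series in $w(s)$ and integrating term by term. This is the same device used in the proof of Lemma~\ref{lem:est1}, but here no approximation is involved.

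\textbf{Step 1: termwise expansion.} For $s>0$ we have $|w(s)|=e^{-(2n-k)s}<1$. For $|z|<1$ the principal branch satisfies $\Log(1-z)=-\sum_{q\geq1}z^{q}/q$. Term-by-term integration is justified by the same domination already shown in the proof of Lemma~\ref{lem:est1}, namely $\sum_q q^{-1}\int_0^\infty |w(s)|^q\,ds<\infty$. The single point $s=0$ has measure zero and does not matter. Put $\vartheta=2\pi(n+k)/3$. The two logarithms then combine to
\[
-\sum_{q\geq1}\frac{\varepsilon_k^{q}}{q}\left(e^{iq\vartheta}+e^{-iq\vartheta}\right)w(s)^{q}
=-2\sum_{q\geq1}\frac{\varepsilon_k^{q}\cos(q\vartheta)}{q}\,w(s)^{q}.
\]
The coefficient is real, so taking imaginary parts only involves $\Imag w(s)^q$.

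\textbf{Step 2: the $s$-integral.} We have $\int_0^\infty w(s)^q\,ds=\bigl(q(2n-k-i\sqrt3 k)\bigr)^{-1}$. Its imaginary part is $\sqrt3k/\bigl(q D\bigr)$, where $D=(2n-k)^2+3k^2$. The factor $k$ cancels the prefactor $2/(\pi k)$, which gives
\[
\Lambda_{n,k}^{(j)}=-\frac{4\sqrt3}{\pi D}\sum_{q\geq1}\frac{\varepsilon_k^{q}\cos(q\vartheta)}{q^{2}}.
\]

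\textbf{Step 3: evaluate the sum in the four cases.} Since $\cos(q\vartheta)$ depends only on $n+k\bmod 3$ and is even in $\vartheta$, write $\varepsilon_k^q\cos(q\vartheta)=\cos(qx)$. Here $x=0$ or $2\pi/3$ when $\varepsilon_k=1$, and $x=\pi$ or $\pi/3$ when $\varepsilon_k=-1$, according as $3|(n+k)$ or not. Now use the classical identity, valid for $0\le x\le 2\pi$,
\[
\sum_{q\geq1}\frac{\cos(qx)}{q^{2}}=\frac{\pi^{2}}{6}-\frac{\pi x}{2}+\frac{x^{2}}{4}.
\]
It gives the values $\pi^2/6$, $-\pi^2/18$, $-\pi^2/12$ and $\pi^2/36$ respectively. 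Multiplying each by $-4\sqrt3/(\pi D)$ yields $-2\pi\sqrt3/(3D)$, $2\pi\sqrt3/(9D)$, $\pi\sqrt3/(3D)$ and $-\pi\sqrt3/(9D)$. These are exactly the four values stated in Lemma~\ref{lem:5.1}.

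The only point needing any care is the justification of interchanging sum and integral near $s=0$ in the singular cases, where $\varepsilon_k e^{\pm i\vartheta}=1$ and the integrand has a logarithmic singularity at $s=0$. The absolute-convergence bound already established in the proof of Lemma~\ref{lem:est1} handles this via dominated convergence (Tonelli). The remainder is routine arithmetic.
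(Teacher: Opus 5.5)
Your proposal is correct and follows the paper's proof essentially step for step. You expand the logarithms termwise, justified by $\sum_q q^{-1}\int_0^\infty e^{-(2n-k)qs}\,ds=\zeta(2)/(2n-k)$, use $\Imag\int_0^\infty w(s)^q\,ds=\sqrt{3}\,k/\bigl(q((2n-k)^2+3k^2)\bigr)$ to reach the paper's formula \eqref{eq:4}, and evaluate $\sum_q \varepsilon_k^q\cos(q\vartheta)/q^2$ in four cases. The only difference is cosmetic: you use the closed form $\sum_{q\ge1}\cos(qx)/q^2=\pi^2/6-\pi x/2+x^2/4$ on $[0,2\pi]$, where the paper splits the sum by $q \bmod 3$, and both give $\pi^2/6,\,-\pi^2/18,\,-\pi^2/12,\,\pi^2/36$.
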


\begin{proof}
Consider the series
\[
\Log (1-z) = - \sum_{q \geq 1} \frac{z^{q}}{q}
\]
for $|z|<1$. We want to apply this to evaluate $\Lambda_{n,k}^{(j)}$.

Termwise integration of the logarithm series is justified by
\[
\sum_{q \geq 1} \frac{1}{q} \int_{0}^{\infty} e^{-(2n-k)qs} \, ds
= \frac{\zeta(2)}{2n-k}.
\]

We have
\[
\int_{0}^{\infty} w(s)^{q} ds = \frac{1}{q(2n-k-i\sqrt{3} \, k)},
\]
and hence
\[
\Imag \left( \int_{0}^{\infty} w(s)^{q} ds \right)
= \frac{\sqrt{3} \, k}{q((2n-k)^{2}+3k^{2}}.
\]

Substituting these into
\[
\Lambda_{n,k}^{(j)}
= -\frac{4}{\pi k} \sum_{q \geq 1} \frac{\varepsilon_{k}^{q} \cos(2q\pi (n+k)/3)}{q}
\Imag \left( \int_{0}^{\infty} w(s)^{q} ds \right)
\]
gives
\begin{equation}
\label{eq:4}
\Lambda_{n,k}^{(j)}=-\frac{4\sqrt{3}}{\pi((2n-k)^{2}+3k^{2})}
\sum_{q \geq 1} \frac{\varepsilon_{k}^{q}\cos(2q\pi (n+k)/3)}{q^{2}}.
\end{equation}

If $\varepsilon_{k}=1$ and $3|(n+k)$, then the sum is $\zeta(2)=\pi^{2}/6$.

If $\varepsilon_{k}=1$ and $3 \nmid (n+k)$, then $n+k \equiv \pm 1 \pmod{3}$,
so $\cos(2q\pi (n+k)/3)=\cos(2q\pi/3)$. Here
\[
\sum_{q \geq 1} \frac{\cos(2\pi q/3)}{q^{2}}
=\sum_{q \geq 1, 3|q}q^{-2}-(1/2)\sum_{q \geq 1, 3 \nmid q}q^{-2}
=\frac{1}{9}\zeta(2)-\frac{1}{2}\frac{8}{9}\zeta(2)=-\frac{\pi^{2}}{18}.
\]

Similarly, for $\varepsilon_{k}=-1$, the sum in \eqref{eq:4} is
$-\pi^{2}/12$ or $\pi^{2}/36$ for $3|(n+k)$ or $3 \nmid (n+k)$, respectively.
\end{proof}

\begin{proof}[Proof of Theorem~\ref{thm:2.1}]
We start with proving the series expansions in Theorem~\ref{thm:2.1}.

Recalling that $n+k \equiv \epsilon \pmod{3}$ with $\epsilon \in \{ -1, 0, 1 \}$,
we evaluate the sum in \eqref{eq:3a}:
\[
\sum_{q \geq 1} \frac{\varepsilon_{k}^{q} \sin (2\pi q(n+k)/3)}{q^{3}}
= \left\{
\begin{array}{cl}
0 & \text{if $\epsilon=0$},\\[1ex]
 \dfrac{2\pi^{3}\epsilon}{81}  & \text{if $\varepsilon_{k}=1$ and $\epsilon=\pm 1$},\\[2ex]
-\dfrac{5\pi^{3}\epsilon}{162} & \text{if $\varepsilon_{k}=-1$ and $\epsilon=\pm 1$}.
\end{array}
\right.
\]

We use formula~1.443.5 in \cite{GR} for this where $k$ there is our $q$ here.
We take $x=2\pi/3$ for
$\left( \varepsilon_{k}, \epsilon \right)=(1,1)$;
$x=4\pi/3$ for $\left( \varepsilon_{k}, \epsilon \right)=(1,-1)$;
$x=5\pi/3$ for $\left( \varepsilon_{k}, \epsilon \right)=(-1,1)$; and
$x=\pi/3$ for $\left( \varepsilon_{k}, \epsilon \right)=(-1,-1)$. For the last
two (with $\varepsilon_{k}=-1$), we use the identity $(-1)^{q} \sin (q\theta)
=\sin(q(\theta+\pi))$.

Combining this with Lemmas~\ref{lem:est1} and \ref{lem:5.1}, we get the following
exact expressions for $\Delta_{n,k}^{(j)}$:
\[
\Delta_{n,k}^{(1)}
= \left\{
\begin{array}{ll}
-\dfrac{\pi \sqrt{3}}{6(n^{2}-nk+k^{2})}+R_{n,k}^{(1)} & \text{if $3|(n+k)$},\\
 \dfrac{\pi \sqrt{3}}{18(n^{2}-nk+k^{2})}
+\dfrac{32\pi^{2}\epsilon}{81} \Real \left( \dfrac{1}{(2n-k-i\sqrt{3}\, k)^{3}} \right)
+R_{n,k}^{(1)} & \text{if $3 \nmid (n+k)$}
\end{array}
\right.
\]
and for $j=2,3$,
\[
\Delta_{n,k}^{(j)}
= \left\{
\begin{array}{ll}
 \dfrac{\pi \sqrt{3}}{12(n^{2}-nk+k^{2})}+R_{n,k}^{(j)} & \text{if $3|(n+k)$},\\
-\dfrac{\pi \sqrt{3}}{36(n^{2}-nk+k^{2})}
-\dfrac{40\pi^{2}\epsilon}{81} \Real \left( \dfrac{1}{(2n-k-i\sqrt{3}\, k)^{3}} \right)
+R_{n,k}^{(j)} & \text{if $3 \nmid (n+k)$}.
\end{array}
\right.
\]

Consider the non-numerical factor in the first term in these expansions. Since
\[
\frac{1}{n^{2}-nk+k^{2}}=\frac{1}{n^{2}}+\frac{k}{n^{3}}
-\frac{k^{3}}{n^{3}(n^{2}-nk+k^{2})}
\]
and $n^{2}-nk+k^{2} \geq 3n^{2}/4$ for $n \geq 2k$, it follows that
\begin{equation}
\label{eq:thm21-1}
\left| \frac{1}{n^{2}-nk+k^{2}}-\frac{1}{n^{2}}-\frac{k}{n^{3}} \right|
\leq \frac{4k^{3}}{3n^{5}} \leq \frac{2k^{2}}{3n^{4}},
\end{equation}
as required for our expansion with a $O \left( k^{2}/n^{4} \right)$ error term.

Similarly, we can write
\[
\Real \left( \frac{1}{(2n-k-i\sqrt{3} \, k)^{3}} \right)
= \frac{4(2n-k)(n-2k)(n+k)}{64(n^{2}-nk+k^{2})^{3}}.
\]

With $x=k/n$, we find that
\begin{align*}
& n^{3}
\left( \Real \left( \frac{1}{(2n-k-i\sqrt{3} \, k)^{3}} \right)
- \left( \frac{1}{8n^{3}} + \frac{3k}{16n^{4}} \right) \right) \\
= & -x^{2}\frac{3x^{5}-7x^{4}+12x^{3}-9x^{2}+2x+6}{16(x^{2}-x+1)^{3}}.
\end{align*}

The polynomial in the numerator is positive for $0 \leq x \leq 1/2$, since
$3x^{5}-7x^{4}+12x^{3}-9x^{2}+2x+6 \geq 6-7(1/2)^{4}-9(1/2)^{2}=53/16>0$.
The polynomial in the denominator has no real roots. Hence
\[
\Real \left( \frac{1}{(2n-k-i\sqrt{3} \, k)^{3}} \right)
<\frac{1}{8n^{3}} + \frac{3k}{16n^{4}}.
\]

Similarly, with $x=k/n$, we find that
\begin{align*}
& n^{3}
\left( \Real \left( \frac{1}{(2n-k-i\sqrt{3} \, k)^{3}} \right)
- \left( \frac{1}{8n^{3}} - \frac{k}{4n^{4}} \right) \right) \\
= & x(x-1)(2x-1)\frac{2x^{4}-4x^{3}+8x^{2}-6x+7}{16(x^{2}-x+1)^{3}},
\end{align*}
which is non-negative for $0<x \leq 1/2$. Combining these two inequalities, we obtain
\begin{equation}
\label{eq:thm21-2}
\left| \Real \left( \frac{1}{(2n-k-i\sqrt{3} \, k)^{3}} \right)
- \frac{1}{8n^{3}} \right|
\leq \frac{k}{4n^{4}}.
\end{equation}

Finally, Lemma~\ref{lem:est1}, along with $2n-k \geq 3n/2$ gives
\begin{equation}
\label{eq:thm21-3}
\left| R_{n,k}^{(j)} \right|
\leq \frac{156}{(2n-k)^{4}}
\leq 156 \left( \frac{2}{3} \right)^{4} \frac{1}{n^{4}}
=\frac{2496}{81n^{4}}<31 \frac{k^{2}}{n^{4}}.
\end{equation}

Substituting \eqref{eq:thm21-1}, \eqref{eq:thm21-2} and
\eqref{eq:thm21-3} into our expressions above for $\Delta_{n,k}^{(j)}$ gives
us the series expansions in Theorem~\ref{thm:2.1}.

\vspace*{1.0mm}

We turn now to the proof of Conjecture~\ref{conj:1.3}.

From our evaluation earlier in the proof of the sum in \eqref{eq:3a}, we have
\[
\left| \sum_{q \geq 1} \frac{\varepsilon_{k}^{q} \sin (2\pi q(n+k)/3)}{q^{3}} \right|
\leq \frac{5\pi^{3}}{162}.
\]

Also
\[
\left| \Real \left( \frac{1}{(2n-k-i\sqrt{3}k)^{3}} \right) \right|
\leq \frac{1}{(2n-k)^{3}}.
\]

Therefore, Lemma~\ref{lem:est1} gives
\[
\left| \Delta_{n,k}^{(j)}-\Lambda_{n,k}^{(j)} \right|
\leq \frac{40\pi^{2}}{81(2n-k)^{3}}+\frac{156}{(2n-k)^{4}}.
\]

If $2n-k \geq 36$, then
\[
\frac{40\pi^{2}}{81(2n-k)^{3}}+\frac{156}{(2n-k)^{4}}
\leq \left( \frac{40\pi^{2}}{81 \cdot 36}+\frac{156}{36^{2}} \right) \frac{1}{(2n-k)^{2}}
<0.256 \frac{1}{(2n-k)^{2}}.
\]

Because $2n-k \geq 3k$,
\[
\left| \Lambda_{n,k}^{(j)} \right|
\geq \frac{\pi\sqrt{3}}{9((2n-k)^{2}+3k^{2})}
\geq \frac{\pi\sqrt{3}}{12(2n-k)^{2}}>\frac{0.453}{(2n-k)^{2}}.
\]

So $\Delta_{n,k}^{(j)}$ has the same sign as $\Lambda_{n,k}^{(j)}$.
From Lemma~\ref{lem:5.1}, $\Lambda_{n,k}^{(j)}$ has exactly the sign required
in each part of Conjecture~\ref{conj:1.3}. This completes the proof of
Conjecture~\ref{conj:1.3} when
$2n-k \geq 36$, in particular whenever $n \geq 24$.

It remains to prove Conjecture~\ref{conj:1.3} for $2 \leq n<24$. We do so by
calculation.

There are 172 choices of $\left( n,k,\varepsilon_{k} \right)$ with $2 \leq n<24$,
$1 \leq k \leq n/2$, and $\gcd(n,k)=1$. For each of these polynomials, the Mahler
measure was calculated using PARI/GP. No counterexamples to Conjecture~\ref{conj:1.3}
were found. This completes the proof.
\end{proof}

\bibliographystyle{amsplain}

\end{document}